\newtheorem{theorem}{Theorem}
\newtheorem{lemma}[theorem]{Lemma}
\newcommand{\F}{\mathbb{F}}
\newcommand{\K}{\mathbb{K}}
\newcommand{\ZZ}{\mathbb{Z}}
\newcommand{\CC}{\mathbb{C}}
\newcommand{\Fq}{\mathbb{F}_q}
\newcommand{\e}{\mathbf{e}}
\newcommand{\mand}{ \quad \text{and} \quad}
\def\cB{{\mathcal B}}
\def\cG{{\mathcal G}}
\def\cM{{\mathcal M}}
\def\cP{{\mathcal P}}
\def\cR{{\mathcal R}}
\def\cS{{\mathcal S}}
\def\cU{{\mathcal U}}
\def\cV{{\mathcal V}}
\def\ba{{\mathbf a}}
\def\bx{{\mathbf x}}
\def\x{{\mathbf x}}
\def\y{{\mathbf y}}
\def\h{{\mathbf h}}
\def\fM{{\mathfrak M}}
\def\fm{{\mathfrak m}}
\newcommand{\rk}{\mathop{\mathrm{rank}}}
\title[Rudin-Shapiro function along irreducible polynomials]{Rudin-Shapiro function along irreducible polynomials over finite fields}
 \author[L.~M{\'e}rai]{L{\'a}szl{\'o} M{\'e}rai}
 \address{Department of Computer Algebra, Eötvös Loránd University,  Pázmány P. sétány 1/c, H-1117, Budapest, Hungary} 
 \email{merai@inf.elte.hu}
\date{\today}
\keywords{irreducible polynomials, finite fields, exponential sums, Rudin-Shapiro function}
\subjclass[2020]{11A63,11T23, 11T30}
\begin{document}

\maketitle

\begin{abstract}
    Let $q$ be an odd prime power and  $\Fq$ be the finite field of $q$ elements. We define the Rudin-Shapiro function $R$ on monic polynomials $f=t^n+f_{n-1}t^{n-1}+\dots + f_0\in\Fq[t]$ over $\Fq$ by 
    $$
R(f)=\sum_{i=1}^{n-1}f_if_{i-1}.
    $$
    We investigate the distribution of the Rudin-Shapiro function along irreducible polynomials. We show that the number of irreducible polynomials $f$ with $R(f)=\gamma$ for any $\gamma\in\Fq$ is asymptotically $q^{n-1}/n$ as $n\rightarrow\infty$.  
\end{abstract}

\section{Introduction}

In recent years, many spectacular results have been obtained on important problems combining some arithmetic properties of integers and some conditions on their digits in a given basis, see for example \cite{Bou13,Bou15,DMR19,MR09,MR10,M16,M18,SW19,Sw20}.
In particular, Mauduit and Rivat \cite{MR10} showed that the \emph{Rudin-Shapiro sequence} along prime numbers is well-distributed.

A natural question is to study analog problems in finite fields, see for example \cite{DMS15,DS13,DES16,DMW21,Ha16,Mer24+,MW22,M19,Po13,P19,Sw18,Sw18b,Sw18a}.
Many of these problems can be solved for finite fields although their analogs for integers are actually out of reach.

In particular, Dartyge and Sárközy \cite{DS13} investigated the distribution of the \emph{Thue-Morse function} in finite fields over polynomial values (see also \cite{DMW21}) and Dartyge, Mérai and Winterhof \cite{DMW21} studied the frequency problem of the \emph{Rudin-Shapiro function} in finite fields over polynomial values. 

Moreover, Porritt \cite{PoPhD} studied the distribution of linear forms of irreducible polynomials as a generalization of the Thue-Morse function for polynomials.

In this paper, we investigate the frequency problem for the Rudin-Shapiro function along irreducible polynomials.
\bigskip

Let $q$ be an odd prime power and $\Fq$ be the finite field of $q$ elements. For a monic polynomial $f\in\Fq[t]$ of degree $n\geq 2$, define the \emph{Rudin-Shapiro function} $R(f)$ as
\begin{equation*}%\label{eq:R_fn}
R(f)=\sum_{i=1}^{n-1}f_if_{i-1} \quad \text{with } f=t^n+f_{n-1}t^{n-1}+\dots+f_0\in\Fq[t].
\end{equation*}

Let $\cP(n)$ be the set of \emph{monic irreducible} polynomials of degree $n$. It is well-known, that
\begin{equation}\label{eq:PNT}
\frac{q^n}{n}-2\frac{q^{n/2}}{n}\leq \#\cP(n)\leq \frac{q^n}{n}    
\end{equation}
(see for example \cite[Theorem~3.25]{LiNi97}).
For $\gamma\in\Fq$, let
$$
\cR_n(\gamma)=\{f\in\cP(n): R(f)=\gamma \}.
$$
Our goal is to prove that the size of $\cR_n(\gamma)$ is asymptotically the same for all $\gamma$.

Our main result is the following theorem.

\begin{theorem}\label{thm:1} 
Let $q$ be an odd prime power. 
There exists a $\delta>0$  such that for any $\gamma\in \Fq$ and $n\geq 3$ we have
$$
\#\cR_n(\gamma) = \frac{\#\cP(n)}{q} +O\left( n^{3/2}q^{(1-\delta)n +1}\right),
$$
where the implied constant  is absolute. One can choose $\delta =1/28$.
\end{theorem}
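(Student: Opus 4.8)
The plan is to detect the condition $R(f)=\gamma$ by additive characters and to reduce the problem to a single exponential sum over irreducible polynomials. Fix a nontrivial additive character $\psi$ of $\Fq$. By orthogonality,
$$
\#\cR_n(\gamma)=\frac{1}{q}\sum_{c\in\Fq}\psi(-c\gamma)\sum_{f\in\cP(n)}\psi\bigl(cR(f)\bigr),
$$
and the term $c=0$ yields the main term $\#\cP(n)/q$. It therefore suffices to bound $S(c)=\sum_{f\in\cP(n)}\psi(cR(f))$ for $c\neq 0$ by $O\!\left(n^{3/2}q^{(1-\delta)n}\right)$, uniformly in $c$: summing the $q-1$ remaining characters and dividing by $q$ then gives the error term of the theorem. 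To reach irreducible polynomials I would pass to the von Mangoldt function $\Lambda$ on $\Fq[t]$ and study $\Psi(c)=\sum_{\deg f=n,\ f\text{ monic}}\Lambda(f)\psi(cR(f))$; since an irreducible $f$ carries weight $\Lambda(f)=n$ while proper prime powers contribute $O(q^{n/2})$, one has $nS(c)=\Psi(c)+O(q^{n/2})$, so it is enough to bound $\Psi(c)$.

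The decisive structural feature of the finite-field setting, which I would exploit throughout, is that the monic polynomials of a fixed degree $d$ form a full affine space $\Fq^{d}$ in their non-leading coefficients. Hence every complete sum of $\psi$ applied to a quadratic function of the coefficients is an honest Gauss sum over $\Fq^{d}$, whose modulus is governed exactly by the rank of the associated symmetric matrix. Moreover, through the convolution $(gh)_k=\sum_{i+j=k}g_ih_j$, the quantity $R(gh)$ is a quadratic form in the coefficients of $g$ for fixed $h$ (and symmetrically), which is what makes a bilinear treatment possible. I would then apply a Vaughan-type identity coming from $\Lambda=\mu*\deg$ in $\Fq[t]$ to split $\Psi(c)$ into Type~I sums $\sum_{a}\alpha_a\sum_{b}\psi(cR(ab))$ with $\deg a$ bounded, and Type~II bilinear sums $\sum_{a}\sum_{b}\alpha_a\beta_b\,\psi(cR(ab))$ with $\deg a,\deg b$ in prescribed ranges and divisor-bounded coefficients, the latter producing the polynomial factor $n^{3/2}$.

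The Type~I sums are the easier part: for fixed short $a$ the inner sum over all monic $b$ of the relevant degree is a complete Gauss sum in the coefficients of $b$, and I would show that the quadratic form $b\mapsto R(ab)$ has nearly full rank except for a thin set of $a$, giving square-root cancellation while the few degenerate $a$ are handled separately. The heart of the argument, and the step I expect to be the main obstacle, is the Type~II sum. Applying Cauchy--Schwarz in $a$ and opening the square leads to
$$
\sum_{b_1,b_2}\beta_{b_1}\overline{\beta_{b_2}}\sum_{a}\psi\!\left(c\bigl(R(ab_1)-R(ab_2)\bigr)\right),
$$
whose inner sum is a complete Gauss sum of the quadratic form $\phi_{b_1,b_2}(a)=R(ab_1)-R(ab_2)$ in the coefficients of $a$. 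Its size is $q^{\deg a}$ times a negative power of $q$ set by the rank of $\phi_{b_1,b_2}$, so everything hinges on showing this rank is large for all but few pairs $(b_1,b_2)$. Because $R$ couples adjacent coefficients and multiplication convolves them, the matrix of $\phi_{b_1,b_2}$ is a banded, Toeplitz-like object built from $b_1$ and $b_2$, whose rank deficiency I expect to be governed by resultant- or gcd-type relations forcing $b_1$ and $b_2$ to be close; the diagonal $b_1=b_2$ contributes rank $0$ and the expected main term $q^{\deg a}\sum_b|\beta_b|^2$. I would therefore classify the degenerate pairs and bound their number, extract the Gauss-sum cancellation for the rest, and finally reassemble the Type~I and Type~II estimates and optimize the cut-off degrees in the Vaughan decomposition; balancing the resulting powers of $q$ is what pins down the admissible value $\delta=1/28$.
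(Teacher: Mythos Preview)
Your outline is correct and matches the paper's proof essentially step for step: character orthogonality, passage to $\Lambda$, Vaughan's identity, Gauss-sum bounds via rank estimates for Type~I, and Cauchy--Schwarz plus rank analysis of $a\mapsto R(ab_1)-R(ab_2)$ for Type~II, with the final balancing of cut-offs yielding $\delta=1/28$. The one point you leave vague---the degeneracy criterion for $(b_1,b_2)$---is in the paper the condition $b_1^{*}b_1=b_2^{*}b_2$ on reversed polynomials (the Toeplitz entries you anticipate are precisely $S^{(\ell)}(b_1)-S^{(\ell)}(b_2)$, the coefficients of $b_1^{*}b_1-b_2^{*}b_2$), after which the exceptional count is a divisor bound; note also that for Type~I no thin exceptional set is needed, since the rank of $b\mapsto R(ab)$ is shown to be at least $n-\deg a-1$ uniformly in $a$.
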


We remark, that  for $n=2$ the result follows from the Weil bound, see comment before \cite[Problem~8]{MeWi22}. 

By Theorem~\ref{thm:1}, we have that $$
\#\cR_n(\gamma) \sim \frac{\#\cP(n)}{q} 
$$
if either $n\rightarrow \infty$ or $q \rightarrow \infty$ provided $n>2/\delta$.

\section{Outline of the proof}

%\subsection{Notations}

For a  positive integer $n$,  let $\cG(n)\subset \Fq[t]$ be the set of polynomials of degree $n$ and $\cM(n)\subset \cG(n)$ be the set of \emph{monic} polynomials of degree $n$.

%Let us identify $\cG(n)$ with the $n+1$ dimensional linear space $\Fq^{n+1}$ via
%\begin{equation}\label{eq:iso}
%f=f_nt^n+\dots +f_0 \in \cG(n)\quad \longleftrightarrow \quad \mathbf{f}=(f_0,\dots, f_n)^T\in \Fq^{n+1}.
%\end{equation}

For $\ell\geq 0$, write
$$
S^{(\ell)}(f)=\sum_{i=\ell}^nf_if_{i-\ell} \quad \text{with }  f=f_nt^n+f_{n-1}t^{n-1}+\dots+f_0\in\Fq[t].
$$
For $\ell=1$, we simply write
\begin{equation*}%\label{eq:def:S}
S(f)=S^{(1)}(f).
\end{equation*}
Clearly, for monic polynomials $f$ we have 
\begin{equation}\label{eq:lin_red}
R(f)=S(f)-f_{n-1}.
\end{equation}
\bigskip

In order to prove Theorem~\ref{thm:1}, we introduce Vaughan's identity for polynomials in Section~\ref{sec:vau}. Then the main tool is to investigate single and double exponential sums in terms of $S(f)$ in Section~\ref{sec:qu}. The bounds of the exponential sum estimates are given in terms of the rank of quadratic forms which is studied in Sections \ref{sec:rev} and \ref{sec:rank}. Then we conclude the proof of Theorem~\ref{thm:1} in Section~\ref{sec:proof}.

%Moreover, for given $n$ we also identify $S$ with the $(n+1)\times (n+1)$ matrix $S=(S_{i,j})$ defined by
%\begin{equation}\label{eq:R}
%S_{i,j}=
%\left\{
%\begin{array}{cl}
%  1   &  \text{if $i=j+1$}, \\
%  0  & \text{otherwise},
%\end{array}
%\right.    \quad 0\leq i,j\leq n.
%\end{equation}
%Then we have $S(f)=\mathbf{f}^TS\mathbf{f}$, where $S(f)$ is defined as \eqref{eq:def:S}.

\bigskip

For given functions $F$ and $G$, the notations $F\ll G$, $G \gg F$ and $F =O(G)$ are all equivalent to the statement that the inequality $|F| \leq c|G|$
holds with some constant $c > 0$. 

\section{preliminaries}
%\subsection{Divisor bounds}
Let $\tau(f)$ denote the number of monic divisors of $f\in\Fq[t]$. First, we state the following bound on $\tau(f)$.

\begin{lemma}\label{lemma:tau}
For any $\varepsilon>0$ we have
$$
\tau(f) \ll \min\left\{ |f|^{(2+\varepsilon)/\log \deg f},2^{\deg f}
\right\},
$$
where the implied constant can depend on $\varepsilon$.
\end{lemma}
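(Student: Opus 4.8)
The plan is to use the unique factorization $f=\prod_i P_i^{a_i}$ into distinct monic irreducibles $P_i$ of degrees $d_i=\deg P_i$, so that $\tau(f)=\prod_i(a_i+1)$ and $\sum_i a_id_i=\deg f$, and to recall that $|f|=q^{\deg f}$. The bound $\tau(f)\le 2^{\deg f}$ is then immediate: since $a+1\le 2^{a}$ for every integer $a\ge 0$ and every $d_i\ge 1$, we get $\tau(f)=\prod_i(a_i+1)\le\prod_i 2^{a_i}=2^{\sum_i a_i}\le 2^{\sum_i a_id_i}=2^{\deg f}$.

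For the first bound I would estimate $\tau(f)/|f|^{\delta}$ for a parameter $\delta>0$ to be chosen, writing
$$
\frac{\tau(f)}{|f|^{\delta}}=\prod_i\frac{a_i+1}{q^{d_ia_i\delta}}.
$$
Set $D=\log 2/(\delta\log q)$. For the irreducibles with $d_i\ge D$ one has $q^{d_i\delta}\ge 2$, hence $\frac{a_i+1}{q^{d_ia_i\delta}}\le\frac{a_i+1}{2^{a_i}}\le 1$, and these factors can be discarded. For $d_i<D$ I would apply the elementary inequality $\log(a+1)-ax\le\log(1/x)$, valid for $0<x<1$ and obtained by maximizing the left-hand side at $a+1=1/x$, with $x=d_i\delta\log q<\log 2<1$. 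Letting $I_q(d)$ denote the number of monic irreducibles of degree $d$ and extending the sum to all of them (each term being nonnegative, this only weakens the bound), this gives
$$
\log\frac{\tau(f)}{|f|^{\delta}}\le\sum_{d<D}I_q(d)\,\log\frac{1}{d\delta\log q}.
$$

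Next I would insert $I_q(d)\le q^d/d$ and $\log\frac{1}{d\delta\log q}\le\log\frac{1}{\delta\log q}$, and sum the resulting series. Since $q\ge 3$, consecutive terms $q^d/d$ grow by a factor $\ge q/2\ge 3/2$, so $\sum_{d<D}q^d/d\ll q^{D}/D$; combined with the identity $q^{D}=2^{1/\delta}$ this yields $\log\frac{\tau(f)}{|f|^{\delta}}\ll 2^{1/\delta}\,\delta\,\log q\,\log\frac{1}{\delta\log q}$. It remains to choose $\delta$. Taking $\delta$ with $2^{1/\delta}=(\deg f)^{1-\eta}$ for a fixed small $\eta>0$, i.e.\ $\delta=\log 2/((1-\eta)\log\deg f)$, makes the error term $o\!\left(\deg f\cdot\log q/\log\deg f\right)$, while the main term $\delta\log q\cdot\deg f$ equals $\frac{\log 2}{1-\eta}\cdot\frac{\deg f\,\log q}{\log\deg f}$. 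Since $\log 2<2$, one may fix $\eta$ (e.g.\ $\eta=1/2$, giving coefficient $2\log 2<2$) so that for any $\varepsilon>0$ the total exponent stays below $(2+\varepsilon)/\log\deg f$ once $\deg f$ is large, whence $\tau(f)\le|f|^{(2+\varepsilon)/\log\deg f}$ in that range.

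Finally I would glue the two estimates. For the finitely many degrees $\deg f<n_0(\varepsilon)$ not covered above, $\tau(f)\le 2^{\deg f}$ is bounded while $|f|^{(2+\varepsilon)/\log\deg f}$ is bounded below by a positive constant (as $q\ge 3$ and $\deg f\ge 2$), so the first bound holds there too after adjusting the implied constant. Having established $\tau(f)\ll|f|^{(2+\varepsilon)/\log\deg f}$ and $\tau(f)\le 2^{\deg f}$ separately, the bound by the minimum follows, since $\tau(f)\le c_1A$ and $\tau(f)\le c_2B$ force $\tau(f)\le\max(c_1,c_2)\min\{A,B\}$. The main obstacle is the balancing of $\delta$: one must keep $2^{1/\delta}$ safely below $\deg f$ so that the small-degree primes contribute only a lower-order error, while not inflating the main term $\delta\log q\cdot\deg f$ past the target exponent; the counting input $I_q(d)\le q^d/d$, with its saving of $1/d$, is exactly what keeps the constant comfortably under $2$.
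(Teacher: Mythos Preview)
Your proof is correct. The paper does not actually prove the first bound but merely cites \cite[Lemma~16]{Mer24+}; you have supplied the standard self-contained argument (the $\Fq[t]$-analogue of the classical integer divisor bound $\tau(n)\le n^{O(1/\log\log n)}$), splitting the prime factors of $f$ at the threshold $D=\log 2/(\delta\log q)$ and optimising in $\delta$. This is almost certainly what the cited lemma does, so the approaches coincide in substance. For the second bound, your inequality $a_i+1\le 2^{a_i}\le 2^{a_id_i}$ is exactly the argument behind the paper's one-line remark that $f$ has at most $\deg f$ irreducible factors, only made explicit so that multiplicities are visibly handled.
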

\begin{proof}
The first bound follows from \cite[Lemma~16]{Mer24+}, while the second bound follows from the fact that a polynomial $f$ has at most $\deg f$ irreducible divisor. 
\end{proof}

We also have the following bound on the second moment of $\tau$, see \cite[Lemma~8]{BiLe}.

\begin{lemma}\label{lemma:tau_moment}
    We have
    $$
\sum_{f\in \cM(n)}\tau(f)^2\leq 4n^3 q^n.
    $$
\end{lemma}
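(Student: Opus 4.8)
The plan is to pass to the generating Dirichlet series of $\tau^2$ over $\Fq[t]$ and read off the coefficient of interest directly. Since $\tau$ is multiplicative with $\tau(P^k)=k+1$ on prime powers, the function $\tau^2$ is also multiplicative, with $\tau(P^k)^2=(k+1)^2$. Writing $|f|=q^{\deg f}$ and using the usual substitution $u=q^{-s}$, this yields the Euler product
\begin{equation*}
\sum_{f\ \mathrm{monic}}\tau(f)^2|f|^{-s}=\prod_{P}\sum_{k\geq 0}(k+1)^2|P|^{-ks},
\end{equation*}
the product running over the monic irreducible polynomials $P\in\Fq[t]$.

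Next I would collapse this product using the elementary identity
$\sum_{k\geq 0}(k+1)^2x^k=(1+x)(1-x)^{-3}=(1-x^2)(1-x)^{-4}$,
which rewrites each local factor as $(1-|P|^{-2s})(1-|P|^{-s})^{-4}$. Hence the whole product equals $\zeta(s)^4/\zeta(2s)$, where $\zeta(s)=\prod_P(1-|P|^{-s})^{-1}$ is the zeta function of $\Fq[t]$. The key input is its closed form $\zeta(s)=(1-q^{1-s})^{-1}=(1-qu)^{-1}$ (and therefore $\zeta(2s)=(1-qu^2)^{-1}$), which turns the generating series into a rational function:
\begin{equation*}
\sum_{n\geq 0}\left(\sum_{f\in\cM(n)}\tau(f)^2\right)u^n=\frac{1-qu^2}{(1-qu)^4}.
\end{equation*}

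It then remains to extract the coefficient of $u^n$. Expanding $(1-qu)^{-4}=\sum_{m\geq 0}\binom{m+3}{3}q^mu^m$ and multiplying by $1-qu^2$ gives exactly
\begin{equation*}
\sum_{f\in\cM(n)}\tau(f)^2=\binom{n+3}{3}q^n-\binom{n+1}{3}q^{n-1}\leq \binom{n+3}{3}q^n,
\end{equation*}
and since $(n+3)(n+2)(n+1)\leq (4n)(3n)(2n)=24n^3$ for every $n\geq 1$, we obtain $\binom{n+3}{3}\leq 4n^3$, which is precisely the asserted bound. A purely combinatorial variant of the same computation is available: one writes $\tau(f)^2=\#\{(d_1,d_2):d_1\mid f,\ d_2\mid f\}$, interchanges summation, and uses that the number of $f\in\cM(n)$ divisible by a fixed monic $m$ of degree at most $n$ is $q^{n-\deg m}$; grouping the pairs $(d_1,d_2)$ by $\mathrm{lcm}(d_1,d_2)$ reproduces the multiplicative function with local values $2k+1$ and hence the same closed form.

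I do not expect a genuine obstacle here: the argument is essentially bookkeeping once the generating series is identified. The only points demanding care are the Euler-product rearrangement into $\zeta(s)^4/\zeta(2s)$ (which relies on absolute convergence for $\Re(s)$ large, so that coefficient comparison is legitimate) and the final elementary inequality $\binom{n+3}{3}\leq 4n^3$, which is sharp at $n=1$ and so leaves no room to improve the constant $4$ without changing the shape of the bound.
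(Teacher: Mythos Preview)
Your argument is correct. The paper does not prove this lemma at all; it simply cites \cite[Lemma~8]{BiLe} (Bienvenu--L\^e). Your approach via the identity
\[
\sum_{f\ \mathrm{monic}}\tau(f)^2u^{\deg f}=\frac{\zeta_{\Fq[t]}(s)^4}{\zeta_{\Fq[t]}(2s)}=\frac{1-qu^2}{(1-qu)^4}
\]
and the resulting exact formula $\sum_{f\in\cM(n)}\tau(f)^2=\binom{n+3}{3}q^n-\binom{n+1}{3}q^{n-1}$ is in fact precisely the argument given in the cited reference, so you have independently reproduced the intended proof. The final inequality $\binom{n+3}{3}\le 4n^3$ for $n\ge 1$ (with equality at $n=1$) is fine; note only that the bound as stated fails for $n=0$, but this is irrelevant for the application.
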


\section{Auxiliary results}
\subsection{Exponential sums of quadratic forms of polynomials}\label{sec:qu}

Let $Q$ be a quadratic form on $\Fq^n$. Then, $Q$ can be written as $Q(\bx)=\bx^TM\bx$ for some symmetric matrix $M\in\Fq^{n\times n}$. The \emph{rank} of $Q$ is the rank of the matrix $M$. The first result we need is the following bound. For completeness, we also include the proof.   

\begin{lemma}\label{lemma:gauss}
Let $q$ be an odd prime power and let $Q$ be a quadratic form, and let $L$ be a linear form on $\Fq^n$. Assume that $Q$ has rank at least $r$. Then
$$
\left|\sum_{\x\in \Fq^n}\psi(Q(\x)+L(\x))\right|\leq q^{n-r/2}
$$
for any non-trivial additive character $\psi$ of $\Fq$.
\end{lemma}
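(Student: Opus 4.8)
The plan is to reduce the multivariate sum to a product of one-variable Gauss sums by diagonalising $Q$, exploiting that $q$ is odd. Since the characteristic is odd, every quadratic form is diagonalisable: there is an invertible linear map $T$ on $\Fq^n$ with $Q(T\x)=\sum_{i=1}^{s}a_i x_i^2$, where $a_i\in\Fq^*$ and $s=\rk(Q)\geq r$ (the rank is preserved under the congruence $M\mapsto T^TMT$). The substitution $\x\mapsto T\x$ is a bijection of $\Fq^n$, so it leaves the sum unchanged, and it carries the linear form $L$ to another linear form $L'(\x)=\sum_{i=1}^{n}b_i x_i$. Thus I may assume from the outset that
$$
\sum_{\x\in\Fq^n}\psi(Q(\x)+L(\x))=\sum_{\x\in\Fq^n}\psi\left(\sum_{i=1}^{s}a_i x_i^2+\sum_{i=1}^{n}b_i x_i\right).
$$

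Next I would factor this over the coordinates, since the exponent splits as a sum of functions each depending on a single $x_i$. For the indices $i>s$ the relevant factor is $\sum_{x_i\in\Fq}\psi(b_i x_i)$, which equals $q$ when $b_i=0$ and $0$ when $b_i\neq0$ by orthogonality of additive characters. Hence if $b_i\neq 0$ for some $i>s$ the whole sum vanishes and the bound holds trivially; otherwise every such factor contributes exactly $q$, giving a total factor of $q^{\,n-s}$ from the purely linear directions.

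For each index $i\leq s$ the factor is the one-variable sum $\sum_{x_i\in\Fq}\psi(a_i x_i^2+b_i x_i)$. Completing the square, $a_i x_i^2+b_i x_i=a_i\bigl(x_i+b_i/(2a_i)\bigr)^2-b_i^2/(4a_i)$ (using that $2a_i\neq 0$), so after the shift $x_i\mapsto x_i-b_i/(2a_i)$ this factor equals $\psi\bigl(-b_i^2/(4a_i)\bigr)\sum_{y\in\Fq}\psi(a_i y^2)$, a unimodular constant times a quadratic Gauss sum. Combining the $q^{\,n-s}$ from the linear directions with these $s$ factors yields $q^{\,n-s}\prod_{i=1}^{s}\bigl|\sum_{y}\psi(a_i y^2)\bigr|$, and since $s\geq r$ it then suffices to show that each quadratic Gauss sum has absolute value $\sqrt q$, which produces $q^{\,n-s}\cdot q^{\,s/2}=q^{\,n-s/2}\leq q^{\,n-r/2}$.

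The crux of the argument, and the step I expect to be the main obstacle, is the evaluation $\bigl|\sum_{y\in\Fq}\psi(a y^2)\bigr|=\sqrt q$ for $a\in\Fq^*$. I would establish this directly: writing $g=\sum_{y}\psi(a y^2)$ and computing $|g|^2=\sum_{y,z}\psi\bigl(a(y^2-z^2)\bigr)$, one substitutes $y=z+h$ to get $\sum_{h,z}\psi\bigl(a(2zh+h^2)\bigr)$; the inner sum over $z$ is $q$ when $h=0$ and $0$ otherwise (again since $2a\neq 0$), giving $|g|^2=q$. This classical fact for odd $q$ is exactly where the oddness hypothesis is used twice — once to diagonalise and once to complete the square — and assembling the factors then completes the proof.
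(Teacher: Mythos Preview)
Your proof is correct, but it takes a different route from the paper's. The paper squares the full sum directly and uses the associated symmetric bilinear form $B(\x,\y)=\x^TM\y$: writing
\[
\Bigl|\sum_{\x}\psi(Q(\x)+L(\x))\Bigr|^2=\sum_{\h}\psi(Q(\h)+L(\h))\sum_{\x}\psi\bigl(2B(\h,\x)\bigr),
\]
the inner sum over $\x$ vanishes unless $\h$ lies in the radical $K=\{\h:B(\h,\cdot)=0\}$, whence $|S|^2\le q^n\cdot\#K\le q^{2n-r}$. You instead diagonalise $Q$ first and then evaluate each one-variable factor explicitly, reducing matters to the classical bound $\bigl|\sum_y\psi(ay^2)\bigr|=\sqrt q$, which you prove by the very same squaring trick in dimension one. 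So the paper applies Weyl differencing once in $n$ variables, while you apply it $s$ times in one variable after a change of basis. The paper's argument is shorter and coordinate-free; yours is more explicit, making visible exactly when the sum vanishes (some $b_i\ne0$ with $i>s$) versus when it attains the bound. Both arguments invoke oddness at the same essential point, namely that $2\ne0$ so the map $\h\mapsto 2B(\h,\cdot)$ (equivalently, the square-completing shift) is non-degenerate on the non-radical directions.
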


\begin{proof}
Write $Q(\x)=\x^TM\x$ for some symmetric matrix $M\in\Fq^{n\times n}$ of rank at least $r$ and let $B$ be the corresponding bilinear form 
$$
B(\x,\y)=\x^TM\y.
$$
Write also $B_\x(\y)=B(\x,\y)$. Let $K\leq \Fq^n$ the subspace of vectors $\x$ such that $B_\x=0$. It's codimension is the rank of $M$ and thus of the quadratic form $Q$.

We have
\begin{align*}
&\left|\sum_{\x\in \Fq^n}\psi(Q(\x)+L(\x))\right|^2\\
&=\sum_{\x,\h\in \Fq^n}\psi(Q(\x+\h)+L(\x+\h))\overline{\psi(Q(\x)+L(\x))}\\
&=\sum_{\h\in \Fq^n}\psi(Q(\h)+L(\h)) 
\sum_{\x\in \Fq^n}
\psi(2B_\h(\x)).
\end{align*}
If $\h\not \in K$, then $B_\h$ is a non-zero linear form and thus 
$$
\sum_{\x\in \Fq^n}
\psi(2B_h(\x))=\sum_{\x\in \Fq^n}
\psi(\x)=0.
$$
For $\h\in K$, we have
$$
\sum_{\x\in \Fq^n}
\psi(2B_\h(\x))=q^n.
$$
The assertion follows by noticing $\# K=q^{n-r}$.
\end{proof}

\subsection{Reversed polynomials}\label{sec:rev}
For a polynomial $a\in\Fq[t]$ of form $a=a_nt^n+\dots + a_1t+a_0$ let $a^*$ be the polynomial with reversed coefficients, that is,
$$
a^*(t)=t^na(t^{-1})=a_0t^n+\dots + a_{n-1}t+a_n\in\Fq[t].
$$
First we show, that the quantities $S^{(\ell)}(a)$ can be determined by using the reversed polynomial $a^*$ of $a$.

\begin{lemma}\label{lemma:star_coeff}
    For any polynomial $a\in\cG(n)$, we have 
    $$
    a^*\cdot a=\sum_{\ell=0}^n S^{(\ell)}(a)\left(t^{n+\ell}+t^{n-\ell}\right). 
    $$
\end{lemma}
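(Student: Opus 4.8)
The plan is to prove the identity by direct expansion, turning it into a coefficient‑comparison problem. Writing $a=\sum_{i=0}^n a_i t^i$, the definition of the reversed polynomial gives $a^*(t)=t^n a(t^{-1})=\sum_{i=0}^n a_i t^{\,n-i}$, so the coefficient of $t^{\,n-i}$ in $a^*$ is exactly $a_i$. Multiplying the two expansions then yields
\[
a^*\cdot a=\Bigl(\sum_{i=0}^n a_i t^{\,n-i}\Bigr)\Bigl(\sum_{j=0}^n a_j t^{\,j}\Bigr)=\sum_{i=0}^n\sum_{j=0}^n a_i a_j\, t^{\,n+j-i}.
\]
Thus everything reduces to reading off, for each exponent $n+m$ with $-n\le m\le n$, the aggregate coefficient $\sum_{j-i=m} a_i a_j$ and matching it against the right‑hand side.

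Next I would group the double sum according to the lag $m=j-i$. For fixed $m=\ell\ge 0$ the admissible pairs are $j=i+\ell$ with $0\le i\le n-\ell$, so the coefficient of $t^{\,n+\ell}$ is $\sum_{i=0}^{n-\ell} a_i a_{i+\ell}$; re‑indexing via $i\mapsto i-\ell$ gives
\[
\sum_{i=0}^{n-\ell} a_i a_{i+\ell}=\sum_{i=\ell}^{n} a_i a_{i-\ell}=S^{(\ell)}(a),
\]
which is precisely the defining sum for $S^{(\ell)}$. The symmetry of the product under swapping $i$ and $j$ (equivalently, replacing $m$ by $-m$) shows that the coefficient of $t^{\,n-\ell}$ equals the same quantity $S^{(\ell)}(a)$. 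Summing the contributions over $\ell=0,1,\dots,n$ reproduces the claimed expression $\sum_{\ell=0}^n S^{(\ell)}(a)\bigl(t^{\,n+\ell}+t^{\,n-\ell}\bigr)$.

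The only point demanding genuine care is the central exponent $t^n$ (the case $\ell=0$), where the two monomials $t^{\,n+0}$ and $t^{\,n-0}$ coincide, so one must track the diagonal contribution $S^{(0)}(a)=\sum_{i=0}^n a_i^2$ without confusion between the two formally identical terms. Apart from this middle‑term bookkeeping, the argument is a routine autocorrelation computation and I anticipate no real obstacle: the whole content lies in recognising the sums $S^{(\ell)}$ as the coefficients of the self‑reversal product $a^*\cdot a$. As a sanity check I would note that $a^*\cdot a$ is palindromic of (formal) degree $2n$, which explains structurally why the coefficients of $t^{\,n+\ell}$ and $t^{\,n-\ell}$ must agree, and I would verify the identity explicitly in the case $n=1$ before committing to the general index manipulation.
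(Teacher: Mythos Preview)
Your approach is essentially identical to the paper's: both compute the coefficient of $t^{n-\ell}$ in $a^*\cdot a$ directly from the double sum and then invoke the palindromic symmetry to get the coefficient of $t^{n+\ell}$. Your caution about the $\ell=0$ term is well placed---as literally written the right-hand side contributes $2S^{(0)}(a)t^n$ rather than $S^{(0)}(a)t^n$---and the paper's proof glosses over this too, but the lemma is only ever applied for $\ell\ge 1$ so the discrepancy is harmless.
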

\begin{proof}
The coefficient of the term $t^{n-\ell}$ in $a^*\cdot a$ is
$$
\sum_{ i+j=n-\ell} a_{n-i}a_{j}=\sum_{j=0}^{n-\ell} a_{j+\ell}a_{j}=S^{(\ell)}(a).
$$
Moreover, the coefficient of $t^{n-\ell}$ is the same as the coefficient of $t^{n+\ell}$.
\end{proof}

Next, we investigate the following type of quadratic equations in terms of the polynomials $a$ and its reversed $a^*$.
\begin{lemma}\label{lemma:quadratic_equation}
Let $\varepsilon >0$.
For given integer $n$, and polynomial $f\in \cG(2n)$, let $N(f)$ be the number of polynomials $a\in\cG(n)$ with 
\begin{equation}\label{eq:quadratic_equation}
a^*\cdot a=f.
\end{equation}
Then
$$
N(f)\ll \min\left\{ 2^n,q^{(2+\varepsilon)\frac{n}{\log_q n}}\right\},
$$
where the implied constant may depend on $\varepsilon$.
\end{lemma}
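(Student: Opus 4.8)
The plan is to reduce the counting of solutions of $a^*\cdot a=f$ to a divisor-counting problem for $f$ and then feed the outcome into the divisor bound of Lemma~\ref{lemma:tau}. The essential point is that the naive estimate $N(f)\le\tau(f)$ is too weak (it gives only $2^{2n}$, and fails the second bound for small $q$); one must save a square root, and this saving comes from the reciprocal pairing of the irreducible factors of $f$. I would first record the structural preliminaries. Since $a\in\cG(n)$ has $a_n\neq 0$, and $\deg(a^*\cdot a)=2n$ forces $a_0\neq 0$, every solution $a$ has nonzero constant term; in particular $N(f)=0$ unless $t\nmid f$ (and, by Lemma~\ref{lemma:star_coeff}, unless $f$ is self-reciprocal, which I may assume). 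On polynomials with nonzero constant term the reversal $b\mapsto b^*$ is a multiplicative involution, so it induces an involution $P\mapsto \widehat P$ on the set of monic irreducibles with $P(0)\neq 0$, where $\widehat P:=P^*/P(0)$, and one has the valuation identity $v_P(a^*)=v_{\widehat P}(a)$ for every such $P$.

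The crux is the following local computation. Applying $v_P$ to $f=a^*\cdot a$ gives $v_P(f)=v_P(a)+v_{\widehat P}(a)$. If $\widehat P\neq P$, this is a single linear constraint on the pair $\bigl(v_P(a),v_{\widehat P}(a)\bigr)$, leaving $v_P(f)+1$ admissible choices; moreover $v_P(f)=v_{\widehat P}(f)$, so the pair $\{P,\widehat P\}$ contributes a factor $v_P(f)+1$ to $N(f)$ but a factor $\bigl(v_P(f)+1\bigr)^2$ to $\tau(f)$. If $\widehat P=P$ (a self-reciprocal prime), then $v_P(f)=2\,v_P(a)$, so $v_P(a)$ is forced, contributing $1$ to $N(f)$ and $v_P(f)+1\ge 1$ to $\tau(f)$. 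Once all valuations are fixed, $a$ is determined up to a scalar $c$, and matching leading coefficients pins down $c^2$, hence $c$ up to sign. Multiplying these contributions I obtain $N(f)\le 2\sqrt{\tau(f)}$, where the factor $\sqrt{\tau(f)}$ is precisely the product over reciprocal pairs, each contributing the square root of its contribution to $\tau(f)$, and the self-reciprocal primes only help.

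Finally I would substitute $\deg f=2n$ and $|f|=q^{2n}$ into Lemma~\ref{lemma:tau} and take the square root. From $\tau(f)\ll 2^{\deg f}=2^{2n}$ I get $N(f)\ll 2^{n}$, and from $\tau(f)\ll |f|^{(2+\varepsilon)/\log\deg f}$ I get $N(f)\ll q^{(2+\varepsilon)n/\log(2n)}$, which after relabelling $\varepsilon$ and comparing $\log(2n)$ with $\log_q n$ (using $q\ge 3$) yields the second bound. I expect the main obstacle to be the careful verification of the involution bookkeeping rather than any hard estimate: one must treat the self-reciprocal primes correctly and track the scalar and leading-coefficient freedom so that the square-root saving is genuine and not lost to over- or undercounting. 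Since this saving is exactly what separates the claimed bound from the trivial $N(f)\le\tau(f)$, getting the pairing argument watertight is the heart of the proof.
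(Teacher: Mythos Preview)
Your argument is correct and reaches the stated bound, but by a genuinely different route from the paper. The paper fixes one solution $b$ of $b^*b=f$ and observes that for any other solution $a$ the monic divisor $d=\gcd(a,b)$ of $b$ determines $a$ (essentially as $d\cdot(b/d)^*$), giving $N(f)\ll\tau(b)$ with $\deg b=n$; Lemma~\ref{lemma:tau} then applies directly to a degree-$n$ polynomial with no square root needed. You instead work prime by prime, exploiting the reciprocal involution $P\mapsto\widehat P$ on irreducibles to pair the prime factors of $f$ and read off $N(f)\le 2\sqrt{\tau(f)}$ with $\deg f=2n$; taking the square root of Lemma~\ref{lemma:tau} recovers the same exponent. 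The paper's gcd trick is shorter and sidesteps both the self-reciprocal case split and the scalar bookkeeping you carry out, while your approach is more transparent about \emph{why} the square-root saving exists and handles the unit ambiguity explicitly (a point the paper glosses over, since $a$ and $-a$ share the same $\gcd$ with $b$). Either way the endpoint is the divisor bound for a degree-$n$ object, so the two arguments are equivalent in strength.
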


\begin{proof} Clearly, for any polynomials $a,b \in \Fq[t]$, we have
\begin{equation}\label{eq:star}
    (a\cdot b)^*=a^* \cdot b^*.
\end{equation}

Now assume that $f$ has the form $$
f=b^*\cdot b \quad \text{for some } b\in \cG(n),
$$
since otherwise the bound trivially holds.

If $a$ is a polynomial satisfying \eqref{eq:quadratic_equation}, put
$$
d=\gcd(a,b).
$$
Then by \eqref{eq:star}
$$
f= d \cdot \frac{b}{d} \cdot d^* \cdot \left(\frac{b}{d}\right)^*,
$$
and as
$$
\gcd(a/d,b/d)=1 \mand \gcd((a/d)^*,(b/d)^*)=1 
$$
we must have 
$$
a=d \cdot \left(\frac{b}{d}\right)^*. 
$$
That is, the divisor $d$ uniquely determines $a$. Whence, the number of polynomials $a$ with \eqref{eq:quadratic_equation} is at most $\tau(b)$. Then the bound follows from Lemma~\ref{lemma:tau}.
\end{proof}

 \subsection{Rank of quadratic forms}\label{sec:rank}
For a fixed $a\in \cM(k)$ the map $h\mapsto R(ah)$ with $h\in\cG(n-k)$ is a quadratic form in the coefficients of $h$. First we investigate its rank. %Its rank is the rank of the quadratic part of it.

\begin{lemma}\label{lemma:rank-g}
Let $a\in\cM(k)$ and for $h\in\cG(n-k)$ consider the map $Q_a: h\mapsto S(ah)$, which is a quadratic form in the coefficients of $h$. If $k<n/2$, then it has rank at least $\rk Q_a \geq n-k-1$.
\end{lemma}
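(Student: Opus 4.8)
The plan is to compute the radical of the symmetric bilinear form attached to $Q_a$ and to show it has dimension at most $2$. Since $Q_a$ is a quadratic form on the $(n-k+1)$-dimensional coefficient space of $h$, a radical of dimension $\le 2$ gives $\rk Q_a\ge (n-k+1)-2=n-k-1$. As $q$ is odd, the radical of $Q_a$ equals the radical of its polarisation $B_a(h,h')=\tfrac12\big(S(a(h+h'))-S(ah)-S(ah')\big)$, so everything reduces to understanding when $B_a(h,\cdot)\equiv 0$.

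First I would put $B_a$ into polynomial form. Writing $g=ah$ of degree $n$ and using Lemma~\ref{lemma:star_coeff} together with the factorisation $g^*=a^*h^*$ (the degree-$k$ and degree-$(n-k)$ reversals), one gets $Q_a(h)=S(ah)=[\,a(t)a^*(t)\,h(t)h^*(t)\,]_{t^{n-1}}$, where $[\,\cdot\,]_{t^j}$ denotes the coefficient of $t^j$. Polarising and requiring $B_a(h,h')=0$ for all $h'$ translates into the statement that the Laurent polynomial $(t+t^{-1})a(t)a(1/t)h(t)$ has all of its coefficients in degrees $0,1,\dots,n-k$ equal to zero. Clearing the denominator by $t^{k+1}$ turns this into a clean divisibility condition: with $\eta(t):=(t^2+1)a(t)a^*(t)$, of degree $2k+2$, the element $h$ lies in the radical iff $\eta(t)h(t)=\ell(t)+t^{n+2}r(t)$ for some $\ell,r$ with $\deg\ell,\deg r\le k$. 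Thus the radical is isomorphic to $\{(\ell,r):\deg\ell,\deg r\le k,\ \eta\mid \ell+t^{n+2}r\}$, the correspondence sending such a pair to $h=(\ell+t^{n+2}r)/\eta$, a polynomial of degree $\le n-k$.

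The heart of the argument is a proportionality phenomenon. Given two radical elements with data $(\ell_i,r_i,h_i)$, $i=1,2$, I eliminate the high part:
\[
\eta\,(r_2h_1-r_1h_2)=r_2(\ell_1+t^{n+2}r_1)-r_1(\ell_2+t^{n+2}r_2)=r_2\ell_1-r_1\ell_2 .
\]
The right-hand side has degree $\le 2k<2k+2=\deg\eta$, while the left-hand side is a multiple of $\eta$; hence both vanish, giving $r_2h_1=r_1h_2$ and $r_2\ell_1=r_1\ell_2$. Consequently all radical triples are $\Fq(t)$-proportional, the radical lies on a single line $\Fq[t]\cdot(\ell_0,r_0,h_0)$ through a primitive generator, and it equals $\{\mu\cdot(\ell_0,r_0,h_0):\mu\in\Fq[t],\ \deg(\mu\ell_0),\deg(\mu r_0)\le k,\ \deg(\mu h_0)\le n-k\}$. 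Comparing leading terms in $\eta h_0=\ell_0+t^{n+2}r_0$ yields $\deg h_0=n-2k+\deg r_0$, so the constraint coming from $h_0$ is the same as the one coming from $r_0$, and the radical has dimension exactly $1+\min(k-\deg\ell_0,\,k-\deg r_0)$.

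It then remains to bound this dimension by $2$, that is, to show the primitive generator satisfies $\max(\deg\ell_0,\deg r_0)\ge k-1$ (equivalently $\deg h_0\ge n-k-1$). I expect this to be the main obstacle, and the difficulty is genuine: the divisibility $\eta\mid \ell_0+t^{n+2}r_0$ can hold with $\ell_0,r_0$ of very small degree exactly when $\eta$ divides a short binomial $c_1+c_2t^{n+2}$, and ruling this out must use both the palindromic factor $(t^2+1)$ of $\eta$ and the hypothesis $k<n/2$, which guarantees that the central block of forced zeros has length $n-k+1>k+1$. Here I would reverse the identity, using $\eta^*=\eta$, to manufacture from $(\ell_0,r_0,h_0)$ a second solution $(r_0^*,\ell_0',h_0^*)$ and substitute it back into the proportionality relation; the resulting constraint relating $h_0$ to its reversal $h_0^*$, combined with the primitivity $\gcd(\ell_0,r_0,h_0)=1$ and the special factor $(t^2+1)$, is what should force $\deg h_0\ge n-k-1$ and close the argument. (For $k\le 1$ this final step is automatic, since $\deg\ell_0,\deg r_0\ge 0\ge k-1$, so the whole subtlety concentrates in the regime $k\ge 2$.)
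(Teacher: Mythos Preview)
Your translation of the radical condition into the divisibility $\eta h=\ell+t^{n+2}r$ (with $\eta=(t^2+1)aa^*$) is correct, and the proportionality trick $\eta(r_2h_1-r_1h_2)=r_2\ell_1-r_1\ell_2$ is elegant and does force all radical triples onto a single $\Fq(t)$-line. Up to this point the argument is sound, modulo one small slip: you write $\deg\eta=2k+2$, but in fact $\deg a^*=k-v(a)$ where $v(a)=\operatorname{ord}_{t=0}a$, so $\deg\eta=2k+2-v(a)$ and hence $\deg h_0=n-2k+\deg r_0+v(a)$; your identification of the $h_0$-constraint with the $r_0$-constraint is therefore only literally correct when $a(0)\neq 0$. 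This is repairable, but it already hints that the final step will not be purely formal.

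The genuine gap is exactly where you place it: you do not prove that the primitive generator satisfies $\deg h_0\ge n-k-1$ (equivalently $\max(\deg\ell_0,\deg r_0)\ge k-1$), and your sketch does not close it. Carrying out the reversal idea gives indeed a second radical triple $(\hat h_0,\tilde r_0,\tilde\ell_0)$ and hence $\hat h_0=\mu h_0$, and one can check that $\mu(t)\mu(1/t)=1$, forcing $\mu=\pm t^m$. But the resulting relations
\[
m=n-k-\deg h_0-v(h_0),\qquad k-\deg\ell_0=m+v(r_0),\qquad k-\deg r_0=m+v(\ell_0)
\]
only recycle information you already have (via $\deg h_0=n-2k+v(a)+\deg r_0$ and $v(\ell_0)=v(a)+v(h_0)$); they do \emph{not} by themselves bound $m$ by $1$. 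So the palindromicity of $\eta$ alone is not enough, and some further input is needed to finish. As written, the proof is incomplete.

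By contrast, the paper's proof is short and direct, and avoids this difficulty entirely. It works on the image side: writing $ag=\sum d_i t^i$ and testing $g\in K$ against the shifted basis $b^{(\ell)}=t^\ell-(t^\ell\bmod a)$ for $\ell\ge k$, one obtains the two-term recursion
\[
d_{\ell+1}+d_{\ell-1}=2\,d_k\,B\!\left(t^k\bmod a,\;t^\ell\bmod a\right),\qquad k\le\ell<n,
\]
so that $(d_k,d_{k+1})$ determines all of $d_k,\dots,d_n$ (and then $d_0,\dots,d_{k-1}$ via reduction mod $a$). This gives $\dim K\le 2$ immediately, with no case analysis and no appeal to the structure of $a^*a$. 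Your polynomial/reversal framework is an interesting alternative viewpoint, but in its current form it trades a one-line recursion for a bounding problem that you have not solved.
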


\begin{proof}
Let $B$ be the symmetric bilinear form so that $S(h)=B(h,h)$. Specially,
\begin{equation}\label{eq:symm-bilin}
    B(t^i,t^j)=
\left\{
\begin{array}{cl}
  1/2   & \text{if } |i-j|=1, \\
  0   &  \text{otherwise},
\end{array}
\right. \quad 0\leq i,j\leq n.
\end{equation}

Put
$$
K=\{g\in\cG(n-k): h\mapsto B(ag,ah) \text{ is the zero map}\}
$$
which is a linear subspace of $\cG(n-k)$. Then 
\begin{equation}\label{eq:codim}
    \rk Q_a = n-k+1-\dim K. 
\end{equation}

Write 
$$
r^{(i)} \equiv t^i \mod a, \quad \deg r^{(i)}<\deg a
$$
and

$$
b^{(i)} = 
\left\{
\begin{array}{cl}
t^i     &  \text{if } i< k,\\
t^i -r^{(i)}     & \text{otherwise},
\end{array}
\right.
\quad i =0,1,\dots, n.
$$
Clearly, for each polynomial $f\in\cG(n)$ of form
$$
f=\sum_{i=0}^n f_i t^i
$$
one has
$$
f=\sum_{i=0}^n c_i b^{(i)} \quad \text{ for some } c_0,c_1,\dots,
$$
moreover
\begin{equation}\label{eq:divrep}
a\mid f \quad \Leftrightarrow \quad c_0=\dots =c_{k-1}=0, \text{ and }c_k=f_k,\dots, c_n=f_n.
\end{equation}
Indeed, if $a\mid f$, then 
\begin{align*}
    f= f-(f\bmod a)=\sum_{i=0}^n f_i (t^i-r^{(i)})=\sum_{i=k}^n f_i b^{(i)},
\end{align*}
as $t^i=r^{(i)}$ for $0\leq i<k$. Hence, $b^{(k)},\dots, b^{(n)}$ is a basis of the vector space of polynomials of form $ah$, $h\in \cG(n-k+1)$.

Now assume, that $g\in K$ and write
$$
ag=\sum_{i=0}^n d_i t^i.
$$
By \eqref{eq:divrep}, we can also write
\begin{equation}\label{eq:divrep2}
  ag=\sum_{i=k}^n d_i b^{(i)}.  
\end{equation}

As $g\in K$, we have by \eqref{eq:symm-bilin} that
for $\ell =k,\dots, n$
\begin{align}\label{eq:split}
0&=B(ag,b^{(\ell)})=B\left(\sum_{i=0}^n d_i t^i,t^\ell -r^{(\ell)}\right) \notag \\
&= B\left(\sum_{i=0}^n d_i t^i,t^\ell\right)-B\left(\sum_{i=0}^n d_i t^i,r^{(\ell)}\right) \\ \notag
&= \frac{d_{\ell+1}+d_{\ell-1}}{2}-B\left(\sum_{i=0}^n d_i t^i,r^{(\ell)}\right).
\end{align}
As $\deg r^{(\ell)}<k$, we have $B(t^i, r^{(\ell)})=0$ for $i>k$. The \eqref{eq:divrep2} and \eqref{eq:split} yield that
$$
d_{\ell+1}+d_{\ell-1}=2B\left(\sum_{i=0}^n d_i t^i,r^{(\ell)}\right)=2d_k B\left(  r^{(k)},r^{(\ell)}\right) , \quad \text{for } k\leq \ell<n.
$$
That  is, $d_k$ and $d_{k+1}$ determine all the values of $d_{k+2}, \dots, d_n$. Also, as 
$$
\sum_{i=0}^{k-1}d_it^i = \sum_{i=k}^{n} d_i (t^i \bmod a),
$$
the values $d_{0}, \dots, d_{k-1}$ are also uniquely determined. Thus
$\dim K=2$. Then the result follows from \eqref{eq:codim}.
\end{proof}

%For a polynomial $a\in\cG(k)$ we define the matrix $[a]\in \Fq^{(n+1)\times (n-k+1)}$ which corresponds to the multiplication $h\mapsto ah$ for $h\in \Fq[t]$, with $\deg h\leq n-k$ via \eqref{eq:iso}. 
In the next lemma we investigate the rank of the quadratic form $h\mapsto S(ah)-S(bh)$ with $h\in\cG(n-k)$.

\begin{lemma}\label{lemma:num_of_solution_basic}
Let $a,b\in \cM(k)$ be polynomials such that
%For
%polynomials $a,b\in \cM(k)$, write $M_{a,b}=[a]^TS[a]-[b]^TS[b]\in \Fq^{(n-k+1)\times (n-k+1)}$, where $S\in \Fq^{(n+1)\times(n+1)}$ is defined in \eqref{eq:def:S}. If 
\begin{equation}\label{eq:star_assumption}
a^*\cdot a \neq b^*\cdot b.    
\end{equation}
Then, the quadratic form $h\mapsto S(ah)-S(bh)$ with $h\in\cG(n-k)$ has rank at least $n-2k-1$.
%then
% $$
%  \rk M_{a,b}\geq n-2k.
% $$
  \end{lemma}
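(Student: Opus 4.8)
The plan is to reduce the quadratic form to a single coefficient of a product of reciprocal polynomials, and then to bound the dimension of its radical.

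First I would exploit the multiplicativity \eqref{eq:star} of reversal. Since $\deg a=k$ and $h\in\cG(n-k)$, one has $(ah)^*=a^*h^*$, so that $(ah)^*\cdot(ah)=(a^*a)(h^*h)$. Feeding this into Lemma~\ref{lemma:star_coeff} applied to $ah\in\cG(n)$ gives $S(ah)=[t^{n-1}]\,(a^*a)(t)\,(h^*h)(t)$, where $[t^m]$ denotes coefficient extraction, and subtracting the analogous identity for $b$ yields
$$
Q(h):=S(ah)-S(bh)=[t^{n-1}]\,\Delta(t)\,(h^*h)(t),\qquad \Delta:=a^*a-b^*b .
$$
By the hypothesis \eqref{eq:star_assumption} we have $\Delta\neq0$. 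Moreover $\Delta$ has degree at most $2k$ and is \emph{palindromic about $t^k$}, i.e. $\Delta_m=\Delta_{2k-m}$, since both $a^*a$ and $b^*b$ have this symmetry by Lemma~\ref{lemma:star_coeff}; this symmetry will be decisive below.

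Next I would identify the radical of $Q$. Writing $Q(h)=B_Q(h,h)$ for the associated symmetric bilinear form on the coefficient space $V\cong\Fq^{n-k+1}$ (spanned by $t^0,\dots,t^{n-k}$, as in the proof of Lemma~\ref{lemma:rank-g}) and polarizing the displayed identity, a direct computation shows that the radical is
$$
K=\Bigl\{g\in V:\ [t^s]\bigl((t^2+1)\Delta(t)g(t)\bigr)=0 \ \text{ for } k+1\le s\le n+1\Bigr\},
$$
and that $\rk Q=(n-k+1)-\dim K$. (Equivalently, in the basis $t^0,\dots,t^{n-k}$ the form $Q$ is the banded symmetric Toeplitz matrix with symbol $\tfrac12 t^{-k-1}(t^2+1)\Delta$.) Hence it suffices to prove $\dim K\le k+2$.

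To bound $\dim K$, set $W=(t^2+1)\Delta$, so that $\deg W\le 2k+2$ and $\deg(Wg)\le n+k+2$ for $g\in V$. I would consider the linear map $\alpha\colon K\to\Fq[t]/(t^{k+1})$, $g\mapsto Wg\bmod t^{k+1}$, and show it is injective. If $\alpha(g)=0$ then, together with the defining conditions of $K$, all coefficients of $Wg$ in degrees $0,\dots,n+1$ vanish, so $t^{n+2}\mid Wg$. As $\gcd(t^2+1,t)=1$ this forces $t^{n+2}\mid\Delta g$, and hence $t^{\,n+2-\nu}\mid g$ with $\nu:=\operatorname{ord}_0\Delta$. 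The palindromy of $\Delta$ gives $\nu\le k$ (the lowest and highest nonzero coefficients sit at $\nu$ and $2k-\nu$, which forces $\nu\le k$), so $n+2-\nu\ge n+2-k>n-k\ge\deg g$, and therefore $g=0$. Thus $\alpha$ is injective, $\dim K\le k+1$, and $\rk Q\ge n-2k$, which is in fact slightly stronger than the asserted bound $n-2k-1$.

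The main obstacle is precisely the inequality $\operatorname{ord}_0\Delta\le k$. A priori the difference $\Delta=a^*a-b^*b$ could be divisible by a large power of $t$ through cancellation of its low-order terms, which would destroy the injectivity of $\alpha$ and the whole argument. The palindromic symmetry inherited from $a^*a$ and $b^*b$ is exactly what prevents this, so the care in the proof goes into establishing that symmetry together with the polarization identity characterizing $K$; the remaining degree bookkeeping is routine.
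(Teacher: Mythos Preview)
Your argument is correct, and it even yields the marginally sharper bound $\rk Q\ge n-2k$. The polarization identity $2B_Q(g,h)=\sum_l h_l\bigl([t^{l+k+1}]+[t^{l+k-1}]\bigr)(\Delta g)$ indeed follows from the palindromy $\Delta_m=\Delta_{2k-m}$ (this is where it is used, not only to bound $\nu$), so your description of the radical and the injectivity of $\alpha$ are both justified.

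The paper's proof reaches the same endpoint by a different route. It writes out the Gram matrix $M$ of $B_{a,b}$ in the monomial basis---this is exactly your banded Toeplitz matrix with symbol $\tfrac12 t^{-k-1}(t^2+1)\Delta$---and then exhibits an explicit $(n-2k+\ell)\times(n-2k+\ell)$ submatrix that is triangular with nonzero diagonal, where $\ell$ is the smallest index with $t^\ell\nmid\Delta$ (equivalently, the first $\nu$ with $S^{(k-\nu)}(a)\ne S^{(k-\nu)}(b)$). So the paper argues via a large nonsingular minor, whereas you argue dually by bounding the nullity through the injection $g\mapsto (t^2+1)\Delta g\bmod t^{k+1}$. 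Your key input $\operatorname{ord}_0\Delta\le k$ is precisely the palindromic fact the paper encodes as ``$\ell\le k$.'' Your formulation via polynomial divisibility is a bit cleaner and makes the role of the palindromy more transparent; the paper's submatrix argument is more hands-on but conceptually equivalent.
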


\begin{proof}
Recall, that the symmetric bilinear form $B$ defined by \eqref{eq:symm-bilin} satisfies $S(h)=B(h,h)$. 

First consider the symmetric bilinear form $B_a$ defined by $$
B_a(h,g)=B(ah,ag),
$$ 
with $h,g\in\cG(n-k)$. Then
$$
B_a(t^i,t^j)=B(a\cdot t^i,a\cdot t^j)=\frac{1}{2} \left(S^{(|i-j-1|)}(a)+S^{(|j-i-1|)}(a) \right).
$$
Similarly, if $B_b$ is the symmetric bilinear form defined by $B_b(h,g)=B(bh,bg)$, $h,g\in\cG(n-k)$, we also have
$$
B_b(t^i,t^j)=B(b\cdot t^i,b\cdot t^j)=\frac{1}{2} \left(S^{(|i-j-1|)}(b)+S^{(|j-i-1|)}(b) \right).
$$
Whence, the symmetric bilinear form $B_{a,b}$ corresponding to $S(ah)-S(bh)$ satisfies
\begin{equation}\label{eq:Bab}
B_{a,b}(t^i,t^j)=\frac{1}{2} \left(S^{(|i-j-1|)}(a)+S^{(|j-i-1|)}(a)-S^{(|i-j-1|)}(b)-S^{(|j-i-1|)}(b) \right).    
\end{equation}

Let $M\in \Fq^{(n-k+1)\times (n-k+1)}$ be the symmetric matrix corresponding to $B_{a,b}$ with respect to the basis $1,t,t^2,\dots, t^{n-k}$. We show, that it has rank at least $n-2k-1$.  To do so
%
%Write $A=[a]^TS[a]\in \Fq^{(n-k+1)\times (n-k+1)}$. 
%
%As the entries of $S[a]$ at position $(i,j)$ is $a_{i-j-1}$ if $i\geq j+1$ and $0$ otherwise, we have that 
%\begin{equation*}%\label{eq:A_entries}
%A_{i,j}=\sum_{\nu=1}^{n+1}a_{\nu-i}a_{\nu-j-1} =S^{(|i-j-1|)}(a), \quad 1\leq i, j\leq n-k+1. 
%\end{equation*}
%Hence
%\begin{equation}\label{eq:M_entries}
%\left((M_{a,b})_{i,j}\right) = S^{(|i-j-1|)}(a)-S^{(|i-j-1|)}(b), \quad 1\leq i, j\leq n-k+1.
%\end{equation}
%
%Let 
let
$\ell$ be the smallest integer such that $t^\ell \nmid a^*\cdot a - b^*\cdot b$. By \eqref{eq:star_assumption}, $\ell$ is well-defined. 
Also, as $a^*\cdot a - b^*\cdot b$ is symmetric, $\ell \leq k$. Then, by the definition of $\ell$ and by Lemma~\ref{lemma:star_coeff} we have
\begin{equation}\label{eq:S!=0}
S^{(\nu)}(a)=S^{(\nu)}(b) \quad \text{for } k-\ell < \nu \leq k \quad \text{and} \quad  S^{(k-\ell)}(a)\neq S^{(k-\ell)}(b).
\end{equation}
By definition of $S^{(\nu)}$, we also have, that 
\begin{equation}\label{eq:S=0}
    S^{(\nu)}(a)=S^{(\nu)}(b)=0 \quad \text{for }\nu >k.
\end{equation}

Then the submatrix
$$
(M_{i,j}) \quad \text{with } k-\ell +2\leq i\leq n-k+1 , \quad 1 \leq j\leq  n-2k+\ell
$$
of $M$ is an upper triangular matrix with nonzero elements in the diagonal by \eqref{eq:S!=0} and \eqref{eq:Bab}. Indeed, for $0\leq\nu,\mu<n-2k+\ell$ with $\nu>\mu$, we have
\begin{align*}
M_{k-\ell+2+\nu, 1+\mu}=
\frac{1}{2} \Big(&
S^{(|k-\ell +\nu-\mu |)}(a)+S^{(|\mu-\nu -k+\ell-2|)}(a)\\
& \quad -S^{(|k-\ell +\nu-\mu|)}(b)-S^{(|\mu-\nu -k+\ell-2|)}(b) 
\Big)=0.    
\end{align*}
by \eqref{eq:S=0}
as $|k-\ell +\nu-\mu|>k-\ell$ and $|\mu-\nu -k+\ell-2|>k-\ell +2$. Moreover, the values in the diagonal are  
\begin{align*}
&M_{k-\ell+2+\nu, 1+\nu}\\
&=
\frac{1}{2} \left(
S^{(|k-\ell|)}(a)+S^{(|-k+\ell-2|)}(a)-S^{(|k-\ell|)}(b)-S^{(|-k+\ell-2|)}(b) 
\right)\\
&=\frac{1}{2} \left(S^{(k-\ell)}(a)-S^{(k-\ell)}(b) \right)\neq 0    
\end{align*}
by \eqref{eq:S!=0} and \eqref{eq:S=0} as $|-k+\ell -2|>k-\ell$.

%
%If $\ell<k$, then \eqref{eq:M_entries} yields, that the submatrix 
%$$
%\left((M_{a,b})_{i,j}\right) \quad \text{with }1\leq i\leq n-2k+\ell , \quad k-\ell \leq j\leq  n-k+1
%$$
%of $M_{a,b}$ is a lower triangular matrix with non-zero elements in the diagonal. 
Thus it has rank $n-2k+\ell-1 $ and so
$$
\rk M\geq n-2k+\ell -1\geq n-2k-1.
$$
%If $\ell=k$, then
%the submatrix 
%$$
%\left((M_{a,b})_{i,j}\right) \quad \text{with }
%2\leq i\leq n-k +1, \quad 1 \leq j\leq  n-k
%$$
%of $M_{a,b}$ is a lower triangular matrix with non-zero elements in the diagonal and so 
%$$
%\rk M_{a,b}\geq n-2k.
%$$
%This 
which proves the result.
\end{proof}

% \hrule

% \begin{lemma}
% Let $a\in\cM(k)$ and for $h\in\cG(n-k)$ consider the map $h\mapsto R(ah)$, which is a quadratic polynomial in the coefficients of $h$. 

% \begin{enumerate}
% \item For $k<n/2$, we have that its rank is at least $n-2k$.
% \item For $k\geq n/2$, the number of polynomials $a\in\cG(k)$ such that this is not a full rank is at most $O\left(q^{3k-n+(2+\varepsilon)(n+k)/\log (n+k))}\right)$.
% \commL{using divisor bounds?}
% \end{enumerate}

% \end{lemma}

% \begin{proof}

% Case II

% First assume, $t\nmid a$,
% \commL{why?}
% and write  $A=(A_{i,j})\in \Fq^{n\times (n-k)}$ as
% $$
% A_{i,j}=
% \left\{
% \begin{array}{cl}
%   a_{i-j}   & \text{if $i\geq j$} \\
%   0   &  \text{otherwise},
% \end{array}
% \right.
% $$ Then
% $$
% R(ah)=[h]^t A^TRA[h].
% $$
% For the polynomial $a$, write
% $$
% R^{(\ell)}(a)=\sum_{i}a_i a_{i+\ell}, \quad \ell =0,1,\dots,
% $$
% where we use the convention, that $a_{i}=0$ if $i<0$ or $i>\deg a$. Specially, $R(a)=R^{(1)}(a)$. Then
% $$
% A^TRA=
% \left(
% \begin{matrix}
%     R^{(1)}(a) & R^{(2)}(a) & \dots &R^{(n-k)}(a)\\
%     R^{(0)}(a) & R^{(1)}(a) & \dots &R^{(n-k-1)}(a)\\
%     \vdots &\vdots&\ddots &\vdots\\
%     R^{(n-k-1)}(a) & R^{(n-k-2)}(a) & \dots &R^{(1)}(a)
% \end{matrix}
% \right)
% $$

% Write $a^*=t^ka\left(\frac{1}{t}\right)=a_0t^k+\dots + a_{k-1}t+a_k\in \cG(k)$. Then
% $$
% a\cdot a^*=\sum_{i}
% $$

% \end{proof}

\subsection{Vaughan's identity for function fields}\label{sec:vau}
In this section we revise Vaughan's identity for function fields.
To do that, let us define the \emph{von Mangoldt function}
$\Lambda$ in $\F_q[t]$ by 
$$
\Lambda (f)=
\left\{
\begin{array}{cl}
 \deg P  & \text{if $f=P^r$ for some irreducible polynomial $P$;}  \\
  0   &  \text{otherwise.}
\end{array}
\right.
$$

Then we have the following result.

\begin{lemma}\label{lemma_Vau}
Let $\Psi:\F_q[t]\rightarrow \CC$ with $|\Psi(f)|\leq 1$. If $1\leq u,v\leq n$ two integers with $u+v<n$, then
$$
\sum_{f\in\cM(n)} \Lambda(f) \Psi(f)\ll n \Sigma_1 + n^{5/2} q^{n-(u+v)/2} \Sigma_2^{1/2}
$$
with
\begin{align*}
    \Sigma_1&=\sum_{|g|\leq q^{u+v}}\left|\sum_{|h|=q^{n}/|g|}\Psi(gh)\right|\\
    \Sigma_2&=\max_{v\leq i\leq n-u} \max_{|g_1|=q^{n-i}}\sum_{|g_2|=q^{n-i} }\left|\sum_{|h|=q^i}\Psi(hg_1)\overline{\Psi(hg_2)} \right|,
\end{align*}
where the summations are taken over \emph{monic} polynomials and the implied constant is absolute.
\end{lemma}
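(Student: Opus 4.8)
The plan is to follow the classical Vaughan identity argument, transplanted from $\mathbb{Z}$ to $\F_q[t]$, and then split the resulting sums into "Type I" and "Type II" pieces that match $\Sigma_1$ and $\Sigma_2$. The starting point is the function-field analogue of Vaughan's combinatorial decomposition of $\Lambda$. Recall the Dirichlet-series identity $\zeta'/\zeta = -\Lambda * 1$ in the polynomial ring; choosing truncation parameters $U = q^u$ and $V = q^v$, one writes $\Lambda = \Lambda_{\leq U} + (\text{main term from } \mu_{\leq U} * \log) - (\mu_{\leq U} * \Lambda_{\leq V} * 1) + (\text{remainder})$, where subscripts denote restriction to polynomials of bounded norm. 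Concretely, for $\deg f = n$ with $U, V < q^n$ (which holds since $u + v < n$), the contribution of $\Lambda_{\leq U}$ vanishes and one obtains an identity expressing $\sum_f \Lambda(f)\Psi(f)$ as a bounded number of bilinear sums of the shape $\sum_{g}\sum_{h} c_g \Psi(gh)$, where the coefficients $c_g$ are either $\mu(g)$, $\log|g|$, or convolutions thereof, hence bounded in absolute value by $\tau(g)\log|g|$ or similar divisor-type quantities.

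Next I would classify these bilinear sums by the degree ranges of the outer variable. When $\deg g \leq u + v$ (the "Type I" range), the coefficient $c_g$ is supported on short polynomials and the inner variable $h$ ranges over the complementary degree; here I bound $|c_g| \leq \tau(g) \ll n$ (say, via Lemma~\ref{lemma:tau}) and pull the coefficient outside, collecting $\sum_{|g| \leq q^{u+v}} |\sum_{|h| = q^n/|g|} \Psi(gh)|$, which is precisely $\Sigma_1$. The factor $n$ in front of $\Sigma_1$ absorbs the divisor bound together with the $O(1)$ number of Type I terms. For the "Type II" range, where both $g$ and $h$ have degree comparable to $n$ (specifically $v \leq \deg h \leq n - u$), I apply the Cauchy–Schwarz inequality in the $g$ variable after bounding its coefficient: writing the sum as $\sum_g c_g (\sum_h \Psi(gh))$ and squaring, one gets $\sum_g |c_g|^2 \cdot \sum_{g}|\sum_h \Psi(gh)|^2$. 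The first factor is controlled by Lemma~\ref{lemma:tau_moment}, contributing $O(n^3 q^{\deg g})$, and expanding the square in the second factor introduces a new variable $g_2$, yielding the correlation sum $\sum_{g_2}|\sum_h \Psi(hg_1)\overline{\Psi(hg_2)}|$ that defines $\Sigma_2$.

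The bookkeeping of exponents is what produces the explicit shape $n^{5/2} q^{n-(u+v)/2}\Sigma_2^{1/2}$: the $q^{n-(u+v)/2}$ arises from the square-root of $q^{\deg g} \leq q^{n-v}$ (the Type II range bound) combined with the normalization of the inner sum over $h$ of length up to $q^{n-\deg g}$, and the power $n^{5/2}$ collects $\sqrt{n^3}$ from the second-moment divisor bound together with the $O(n)$ loss from the dyadic-style partition into degree classes and the $O(1)$ number of Type II convolution terms. The main obstacle is organizing the multiple convolution terms so that each one falls cleanly into either the Type I or Type II regime under the constraint $u + v < n$, and then tracking the divisor-bound losses carefully enough that they aggregate to exactly the stated powers of $n$; the delicate point is that one of the Vaughan terms is itself a triple convolution $\mu_{\leq U} * \Lambda_{\leq V} * 1$, which must be regrouped by associativity so that the "long" variable plays the role of $h$ and the product of the two "short" variables plays the role of $g$, with $\deg g \leq u + v$ keeping it in the Type I range while the complementary piece lands in Type II via the intermediate degree $i$ over which $\Sigma_2$ is maximized.
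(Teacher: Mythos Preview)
Your overall plan is the paper's: Vaughan's identity gives $\sum_{f}\Lambda(f)\Psi(f)=S_1-S_2+S_3$, the first two are Type~I and absorbed into $\Sigma_1$, and $S_3$ is handled by Cauchy--Schwarz and the $\tau$-moment lemma. Two of the details you sketch, however, do not actually work as written.

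\textbf{Type~I coefficients.} The bound $|c_g|\le\tau(g)\ll n$ is false: $\tau(g)$ can be as large as $2^{\deg g}$, and Lemma~\ref{lemma:tau} gives nothing close to $O(n)$. The reason the Type~I weights are $\ll n$ is structural. In $S_1$ the weight on $a$ is $\mu(a)\deg(f/a)$, trivially $\le n$. In $S_2$, after grouping $z=ab$, the weight on $z$ is $\sum_{ab=z,\,|a|\le q^u,\,|b|\le q^v}\mu(a)\Lambda(b)$, which in absolute value is at most $\sum_{b\mid z}\Lambda(b)=\deg z\le u+v$.

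\textbf{Type~II Cauchy--Schwarz.} Write $S_3=\sum_{b}\Lambda(b)\sum_{c}\beta_c\,\Psi(bc)$ with $\beta_c=\sum_{a\mid c,\,|a|>q^u}\mu(a)$, where $q^v\le|b|\le q^{n-u}$. To obtain $\Sigma_2$ as stated (inner variable $h$ of degree $i\in[v,n-u]$, outer variables $g_1,g_2$ of degree $n-i$) one must apply Cauchy--Schwarz over $b$, the $\Lambda$-weighted variable: the first factor is $\sum_b\Lambda(b)^2\le n\sum_b\Lambda(b)\le n\,q^{n-u}$, and expanding the second factor $\sum_b|\sum_c\beta_c\Psi(bc)|^2$ produces the correlation in $c_1,c_2$ with inner sum over $b$, matching $\Sigma_2$. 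Applying Cauchy--Schwarz over the $\beta$-weighted variable, as you describe, expands instead to a correlation in the $\Lambda$-weighted variables, which is $\Sigma_2$ with the roles of $g$ and $h$ swapped (and hence the range $[v,n-u]$ replaced by $[u,n-v]$); this does not recover the stated lemma. Relatedly, Lemma~\ref{lemma:tau_moment} is not the ``first factor'' of Cauchy--Schwarz; it enters only after the square is opened, via $|\beta_{c_1}\overline{\beta_{c_2}}|\le\tfrac12(|\beta_{c_1}|^2+|\beta_{c_2}|^2)$ and then $\sum_{|c|=q^{n-i}}|\beta_c|^2\le\sum_{|c|=q^{n-i}}\tau(c)^2\ll n^3 q^{n-i}$, after which one takes $\max_{c_1}$ to isolate $\Sigma_2$. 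Combining these gives $|S_3|^2\ll n^5 q^{2n-u-v}\Sigma_2$, which is the source of the $n^{5/2}q^{n-(u+v)/2}$ in the statement.
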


\begin{proof}
Let $\mu$ be the \emph{Möbius function} on $\F_q[t]$ defined by
$$
\mu (f)=
\left\{
\begin{array}{cl}
 (-1)^k  & \text{if $f=P_1\dots P_k$ for some distinct }  \\
   & \text{\quad irreducible polynomials $P_1,\dots, P_k$;}  \\
  0   &  \text{otherwise.}
\end{array}
\right.
$$
Then, one can write 
\begin{equation*}%\label{eq:Va01}
    \Lambda(f)=\sum_{a\mid f}\mu(a)\deg (f/a),
\end{equation*}
where the sum is taken over only \emph{monic} polynomials. Then
\begin{equation*}%\label{eq:Va02}
    \Lambda(f)= \sum_{\substack{a\mid f \\ |a|\leq q^u}} \mu(a) \deg (f/a) - \underset{\substack{ab\mid f\\ |a|\leq q^u, |b|\leq q^v}}{\sum  \ \sum } \mu(b)\Lambda(b) + \underset{\substack{ab\mid f\\ |a|> q^u, |b|>q^v}}{\sum  \ \sum } \mu(a)\Lambda(b).
\end{equation*}
It yields
\begin{equation*}
\sum_{f\in\cM(n)}\Lambda(f)\Psi(f)
= S_1-S_2+S_3,
\end{equation*}
where
\begin{align*}
S_1&=\underset{\substack{|ab|=q^n\\ |a|\leq q^u}}{\sum \ \sum}
 \mu(a) \deg (b) \Psi(ab),\\
 S_2&=\underset{\substack{|abc|=q^n\\ |a|\leq q^u, |b|\leq q^v}}{\sum  \ \sum \ \sum} \mu(a)\Lambda(b)\Psi(abc) ,\\
 S_3&=\underset{\substack{|abc|=q^n\\ |a|> q^u, |b|>q^v}}{\sum  \ \sum \sum } \mu(a)\Lambda(b)\Psi(abc).
\end{align*}
For $S_1$, we have
\begin{equation*}%\label{eq:Va_S1}
S_1=\sum_{|a|\leq q^u}\mu(a)(n-\deg (a))\sum_{\substack{|b|=q^n/|a|}}\Psi(ab)\ll n\sum_{|a|\leq q^u}\left|\sum_{\substack{|b|=q^n/|b|}}\Psi(ab)\right|.
\end{equation*}
For $S_2$, observe, that 
$$
\sum_{b\mid z}\Lambda(b)=\deg (z).
$$
Then, we have
\begin{equation*}
\begin{split}
    S_2&=\sum_{r=n-u-v}^n \left(  \underset{\substack{|a|\leq q^u \, |b|\leq q^v\\ |ab|=q^{n-r}}}{\sum \ \sum}\mu(a)\Lambda(b)\right)\sum_{|c|=q^r}\Psi(abc)\\
    &\ll (u+v)\sum_{|z|\leq q^{u+v}}\left|\sum_{|c|=q^n/|z|}\Psi(zc)\right| .
\end{split}
\end{equation*}
For $S_3$, observe, that
$$
\sum_{a\mid c}\mu(a)=\left\{ 
\begin{array}{cl}
  1   & \text{if $c=1$}; \\
  0   & \text{otherwise}.
\end{array}
\right.
$$
Then, we have
\begin{equation*}
    S_3=\sum_{q^v\leq |b|\leq q^{n-u}}\Lambda(b)\sum_{|c|=q^n/|b|}\left(\sum_{\substack{a\mid c\\ |a|>q^u}}\mu(a)\right)\Psi(bc).
\end{equation*}
For $c\in\F_q[t]$, write
$$
\beta_c=\sum_{\substack{a\mid c\\ |a|> q^u}}\mu(a).
$$
Then, by the Cauchy-Schwarz inequality,
$$
\left|S_3\right|^2\leq \left(\sum_{q^v\leq |b|\leq q^{n-u}}\Lambda(b)^2\right)\sum_{q^v\leq |b|\leq q^{n-u}}\left|\sum_{|c|=q^n/|b|}\beta_c\Psi(bc)  \right|^2.
$$
For the first term, we have
$$
\sum_{q^v\leq |b|\leq q^{n-u}}\Lambda(b)^2 \leq n \sum_{ |b|\leq q^{n-u}}\Lambda(b)\leq n q^{n-u} .
$$
For the second term,
\begin{align*}
&\sum_{q^v\leq |b|\leq q^{n-u}}\left|\sum_{|c|=q^n/|b|}\beta_
c\Psi(bc)  \right|^2\\
&=\sum_{v\leq i\leq n-u} \sum_{|c_1|=q^{n-i}}\sum_{|c_2|=q^{n-i}} \beta_{c_1}\overline{\beta_{c_2}}\sum_{|b|=q^i}\Psi(bc_1)\overline{\Psi(bc_2)}\\
&\ll \sum_{v\leq i\leq n-u} \sum_{|c_1|=q^{n-i} } |\beta_{c_1}|^2 \sum_{|c_2|=q^{n-i} }\left|\sum_{|b|=q^i}\Psi(bc_1)\overline{\Psi(bc_2)} \right| \\
&\ll \sum_{v\leq i\leq n-u} \left(\sum_{|c_1|=q^{n-i} } |\beta_{c_1}|^2\right) \max_{|c_1|=q^{n-i}}\sum_{|c_2|=q^{n-i} }\left|\sum_{|b|=q^i}\Psi(bc_1)\overline{\Psi(bc_2)} \right|
\end{align*}
by $|\beta_{c_1}\overline{\beta_{c_2}}|\leq |\beta_{c_1}|^2/2 +|\beta_{c_2}|^2/2$ and by the symmetry.

By Lemma~\ref{lemma:tau_moment},
$$
\sum_{|c_1|=q^{n-i} } |\beta_{c_1}|^2\ll \sum_{|c_1|=q^{n-i} } |\tau(c_1)|^2 \ll n^{3}q^{n-i}.
$$

Thus
$$
|S_3|^2\ll n^5q^{2n-u-v} 
\max_{v\leq i\leq n-u}\max_{|c_1|=q^{n-i}}\sum_{|c_2|=q^{n-i} }\left|\sum_{|b|=q^i}\Psi(bc_1)\overline{\Psi(bc_2)} \right|.
$$
\end{proof}

\section{Proof of Theorem \ref{thm:1}}\label{sec:proof}
Let $\gamma\in \Fq$. By the orthogonality of additive characters, we have
\begin{equation*}%\label{eq:pr:1}
\cR(\gamma)=\frac{1}{q}\sum_{\psi}\sum_{f\in\cP(n)}\psi(R(f)-\gamma),
\end{equation*}
where the first summation is taken over all additive characters of $\Fq$. For the trivial additive character $\psi_0$, we have
\begin{equation*}%\label{eq:pr:2}
   \frac{1}{q}\sum_{f\in\cP(n)}\psi_0(R(f)-\gamma)=\frac{\#\cP(n)}{q}
\end{equation*}
which gives the main term.

The number of irreducible polynomials $f$ of degree $\deg f<n$ with $\deg f\mid n$ is $O(q^{n/2})$ by \eqref{eq:PNT}, thus for non-trivial additive characters $\psi\neq \psi_0$, we have
\begin{equation*}%\label{eq:P:RS:01}
  \begin{split}
    \sum_{f\in\cP(n)}\psi(R(f))
    &=\frac{1}{n}\sum_{f\in \cP(n)}\Lambda(f)\psi(R(f))\\
    &=\frac{1}{n}\sum_{f\in\cM(n)}\Lambda(f)\psi(R(f)) +O(q^{n/2}).
    \end{split}
\end{equation*}
Then,
by Lemma~\ref{lemma_Vau},
\begin{equation}\label{eq:Vau}
\sum_{f\in\cM(n)}\Lambda(f)\psi(R(f)) \ll  n \Sigma_1 + n^{5/2} q^{n-(u+v)/2} \Sigma_2^{1/2},
\end{equation}
where
$$
\Sigma_1=\sum_{|g|\leq q^{u+v}}\left|\sum_{|h|=q^{n}/|g|}\psi(R(gh))\right|
$$
and
$$
\Sigma_2=\max_{v\leq i\leq n-u} \max_{|g_1|=q^{n-i}}\sum_{|g_2|=q^{n-i} }\left|\sum_{|h|=q^i}\psi(R(hg_1))\overline{\psi(R(hg_2))} \right|
$$
with some $u,v$ to be chosen later.

In order to estimate $\Sigma_1$, observe, that 
by Lemma~\ref{lemma:rank-g}, the quadratic form $h\mapsto S(gh)$ with $\deg h\leq i$ has rank at least $n-\deg g-1$. If we consider it only on monic polynomials $h$, then its rank reduced by at most one and in this case $R(gh)=S(gh)+L_g(h)$ by \eqref{eq:lin_red} with some linear polynomial $L_g$ in terms of the coefficients of $h$.
Then, by Lemma~\ref{lemma:gauss},
\begin{align}\label{eq:Sigma1}
\Sigma_1&\ll \sum_{|g|\leq q^{u+v}} q^{n-\deg g- (n-\deg g-2)/2}\\ \notag
&=
\sum_{|g|\leq q^{u+v}} q^{(n-\deg g)/2+1}\\ \notag
&\ll q^{(n+2)/2} \sum_{i\leq u+v} q^{i/2}\\ \notag
&\ll q^{(n+u+v+2)/2}.%\ll q^{29n/30}
\end{align}
%using \eqref{eq:uv}.

In order to estimate $\Sigma_2$, let us fix $v\leq i\leq n-u$ and consider
$$
I(g_1,g_1)=\sum_{|h|=q^i}\psi(R(g_1h)-R(g_2h)).
$$
with monic polynomials $g_1,g_2$ of degree $n-i$.
For given $g_1$, let 
$$
\cB(g_1)=\{g\in \cM(n-i): g^*g=g_1^*g_1 \}.
$$
If $g_1\not\in \cB(g_1)$, we have by Lemma~\ref{lemma:num_of_solution_basic}, that the rank of the quadratic form $h\mapsto S(g_1h)-S(g_2h)$ with $\deg h \leq i$  is at least $n-2(n-i)-1=2i-n-1$. If we consider it only on monic polynomials $h\in\cM(i)$, then its rank is reduced by at most one and in this case $R(g_1h)-R(g_2h)=
S(g_1h)-S(g_2h)+L_{g_1,g_2}(h)$ with some linear polynomial $L_{g_1,g_2}$ in the coefficients of $h$. Thus, in this case by Lemma~\ref{lemma:gauss} we get
$$
I(g_1,g_2)\ll q^{i-(2i-n-2)/2}=q^{n/2+1}.
$$
If $g_2\in\cB(g_1)$, we use the trivial bound
$$
I(g_1,g_2)\ll q^i.
$$

By Lemma~\ref{lemma:quadratic_equation} we have
$$
\#\cB(g_1)\ll \min\left\{ 2^n,q^{(2+\varepsilon)\frac{n}{\log_q n}}\right\}.
$$
By choosing $\varepsilon =1/2$, we get
$$
\#\cB(g_1)\ll  q^{n/14},
$$
where the implied constant is absolute. Then
\begin{equation}\label{eq:Sigma2}
\Sigma_2\ll \max_{v\leq i\leq n-u}\left(q^{n/14}q^i +q^{n-i}q^{n/2+1}  \right)\ll q^{15n/14-u}+q^{3n/2 -v+1}.
\end{equation}
Combining \eqref{eq:Vau}, \eqref{eq:Sigma1} and \eqref{eq:Sigma2}, we get
\begin{align*}
\sum_{f\in\cM(n)}\Lambda(f)\psi(R(f)) &\ll n q^{(n+u+v+2)/2} \\
&\quad + n^{5/2}q^{n-(u+v)/2}\left( q^{15n/28-u/2}+q^{3n/4-v/2+1/2}\right)\\
&\ll n q^{(n+u+v+2)/2} \\
&\quad + n^{5/2}\left( q^{43n/28-u-v/2  }+q^{7n/4-u/2-v+1/2}\right).
\end{align*}
Choosing
$$
u=\frac{3}{14}n \mand v=\frac{10}{14}n,
$$
we get
\begin{align*}
\sum_{f\in\cM(n)}\Lambda(f)\psi(R(f)) 
&\ll   n^{5/2}q^{27n/28 +1}.
\end{align*}

\section*{Acknowledgment}

The author wishes to thank Arne Winterhof for his valuable comments.

The author was was partially supported by NRDI (National Research Development and Innovation Office, Hungary) grant FK 142960 and by the János Bolyai Research Scholarship of the Hungarian Academy
of Sciences.

%%%%%%%%%%%%%%%%%%%%%%%%%%%%%%%%%


\begin{thebibliography}{99}

\bibitem{BiLe} P.-Y. Bienvenu, T. H. Lê,
Linear and quadratic uniformity of the Möbius function over $\Fq[t]$.
Mathematika 65 (2019), no. 3, 505--529.

\bibitem{Bou13} J. Bourgain, Prescribing the binary digits of primes, Israel J. Math. 194 (2013), no. 2, 935–955.

\bibitem{Bou15} J., Bourgain, Prescribing the binary digits of primes, II, Israel J. Math. 206 (2013), no. 1, 165--182.


\bibitem{DMR19} M. Drmota, C. Mauduit, J. Rivat, Normality along squares, J. Eur. Math. Soc. (JEMS) 21 (2019), no. 2, 507--548.

\bibitem{DMS15} C. Dartyge, C. Mauduit, A. Sárközy, Polynomial values and generators with missing digits in finite fields, Funct. Approx. Comment. Math. 52 (2015), no. 1, 65--74.

\bibitem{DMW21} C. Dartyge, L. Mérai, A. Winterhof, On the distribution of the Rudin-Shapiro function for finite fields, Proc. Amer. Math. Soc. 149 (2021), no. 12, 5013–5023.


\bibitem{DS13} C. Dartyge, A. Sárközy, The sum of digits function in finite fields, Proc. Amer. Math. Soc. 141 (2013), no. 12, 4119--4124.

\bibitem{DES16} R. Dietmann, C. Elsholtz,  I. E. Shparlinski, Prescribing the binary digits of squarefree numbers and quadratic residues, Trans. Amer. Math. Soc. 369 (2017), no. 12, 8369--8388.


\bibitem{Ha16} J. Ha, Irreducible polynomials with several prescribed coefficients. Finite Fields Appl. 40 (2016), 10--25.

\bibitem{LiNi97} R. Lidl, H. Niederreiter, Finite fields, 2nd ed., Encyclopedia of Mathematics and its Applications, vol. 20, Cambridge University Press, Cambridge, 1997.

\bibitem{MW22} M. Makhul, A. Winterhof, Normality of the Thue-Morse function for finite fields along polynomial values. Res. Number Theory 8 (2022), no.3, Paper No. 38, 17pp.


\bibitem{MR09}
C. Mauduit and J. Rivat, La somme des chiffres des carrés, Acta Math. 203 (2009), no. 1, 107--148.


\bibitem{MR10} C. Mauduit,  J. Rivat, Sur un problème de Gelfond: la somme des chiffres des nombres premiers, Ann. of Math. (2) 171 (2010), no. 3, 1591--1646.

\bibitem{M19} S. Mattheus, Trace of products in finite fields from a combinatorial point of view, SIAM J. Discrete Math. 33 (2019), no. 4, 2126--2139.


\bibitem{M16} J. Maynard, Primes with restricted digits, Invent. Math. 217 (2019), no. 1, 127--218.

\bibitem{Mer24+} L. Mérai, Smooth polynomials with several prescribed coefficients, preprint, arXiv: 2307.10280. 

\bibitem{MeWi22} L. Mérai, A. Winterhof, Pseudorandom sequences derived from automatic sequences,  Cryptogr. Commun. 14, 783--815 (2022). 


\bibitem{M18} C. Müllner, The Rudin-Shapiro sequence and similar sequences are normal along squares, Canad. J. Math. 70 (2018), no. 5, 1096--1129.

\bibitem{Po13}  P. Pollack, Irreducible polynomials with several prescribed coefficients, Finite Fields Appl. 22 (2013)
70–78.

\bibitem{P19} S. Porritt, Irreducible polynomials over a finite field with restricted coefficients, Canad. Math. Bull. 62 (2019), no. 2, 429--439.

\bibitem{PoPhD}  S. Porritt, Some topics in the analytic number
theory of polynomials over a finite field, University College London, PhD thesis, 2020.

\bibitem{SW19} Z. Sun,  A. Winterhof, On the maximum order complexity of subsequences of the Thue-Morse and Rudin-Shapiro sequence along squares, Int. J. Comput. Math. Comput. Syst. Theory 4 (2019), no. 1, 30--36.

\bibitem{Sw18} C. Swaenepoel, On the sum of digits of special sequences in finite fields, Monatsh. Math. 187 (2018), no. 4, 705--728.

\bibitem{Sw18a} C. Swaenepoel, Prescribing digits in finite fields, J. Number Theory 189 (2018), 97--114.

\bibitem{Sw18b} C. Swaenepoel, Trace of products in finite fields, Finite Fields Appl. 51 (2018), 93--129.



\bibitem{Sw20} Cathy Swaenepoel, Prime numbers with a positive proportion of preassigned digits, Proc. Lond. Math. Soc. (3) 121 (2020), no. 1, 83--151.
\end{thebibliography}
\end{document}